\documentclass[11pt, a4paper, reqno]{amsart}
\usepackage[margin=1.2in]{geometry}

\usepackage{amssymb}
\usepackage[noadjust]{cite}
\usepackage[protrusion=true,expansion=false]{microtype}
\usepackage[colorlinks=true,linkcolor=blue]{hyperref}
\usepackage{enumerate}
\usepackage{fouriernc}

\numberwithin{equation}{section}

\theoremstyle{plain}
\newtheorem{theorem}{Theorem}[section]
\newtheorem{proposition}[theorem]{Proposition}
\newtheorem{lemma}[theorem]{Lemma}
\newtheorem{corollary}[theorem]{Corollary}

\theoremstyle{definition}

\theoremstyle{remark}
\newtheorem{remark}[theorem]{Remark}

\DeclareMathOperator{\Sph}{Sph}
\newcommand{\ran}{\operatorname{r}}
\newcommand{\C}{\mathbb{C}}
\newcommand{\one}{\mathbf{1}}

\renewcommand{\emptyset}{\varnothing}

\title[Metric identification of projections]{The norm reveals projections in the positive unit sphere of an arbitrary C$^*$-algebra}

\author[A.M. Peralta and P. Saavedra]{Antonio M. Peralta \and Pedro Saavedra}
\address[A. M. Peralta]{Instituto de Matem{\'a}ticas de la Universidad de
Granada (IMAG), Departamento de An{\'a}lisis Matem{\'a}tico, Facultad de
Ciencias, Universidad de Granada, 18071 Granada, Spain.}
\email{aperalta@ugr.es}
\address[Pedro Saavedra]{Departamento de An\'{a}lisis Matem\'{a}tico,
Facultad de Ciencias, Universidad de Granada, 18071 Granada, Spain.}
\email{psaavedraortiz@ugr.es}
\keywords{C$^*$-algebra; JB$^*$-algebra; JB$^*$-triple; projection;
tripotent; positive unit sphere; double sphere}
\subjclass[2020]{Primary: 46L05, 46L70; Secondary: 46B20}
\date{October 26, 2026}

\begin{document}

\begin{abstract}
We show how a simple metric condition in terms of the C$^*$-norm of an arbitrary C$^*$-algebra $A$ can be employed to determine those norm-one positive elements which are projections. Concretely, if we write $S_{A^+}$ for the intersection of the positive cone and the unit sphere of $A$, we establish that an element $a \in S_{A^+}$ is a projection if, and only if, $$\{a\} = \left\{x\in S_{A^+}: \|x-z\| = 1 \hbox{ for all } z\in S_{A^+}\hbox{ with } \|a-z\|=1 \right\}.$$ This proves a conjecture which has been previously confirmed for von Neumann algebras and unital C$^*$-algebras. In the setting of not necessarily unital JB$^*$-algebras, the Shirshov--Cohn theorem is invoked to obtain a similar metric characterisation of projections. In the wider setting of JB$^*$-triples, we show how a metric characterisation in terms of inner ideals can be established to determine non-zero tripotents.
\end{abstract}
\maketitle

\section{Introduction}

Let $X$ be a real or complex normed space, and let $P,M\subseteq X$. The \emph{unit sphere around $M$ in $P$} is the set
\begin{equation}\label{eq:unit-sphere-around-a-set}
\Sph_P(M)
=\big\{x\in P:\|x-y\|=1\text{ for every }y\in M\big\}.
\end{equation}
We abbreviate $\Sph_P(\{a\})$ to $\Sph_P(a)$ and adopt the convention that 
$\Sph_P(\emptyset)=P$. For $a\in P$, the \emph{double sphere around $a$ in $P$} ($\Sph_P(\Sph_P(a))$ in short) consists of the points of $P$ at distance one
from every element of $\Sph_P(a)$. By symmetry of the distance, the double sphere around $a$ in $P$ always contains the element $a$. 

In the setting of C$^*$-algebras, the double sphere construction around an element in the positive sphere is employed to characterise projections in terms of the norm. Let $A$ be a C$^*$-algebra (or, more generally, a JB$^*$-algebra) whose closed positive cone is denoted by $A^+$. The symbol $S_{A^+}$ will stand for the positive sphere of $A$, that is, $$S_{A^+}=\big\{a\in A^+:\|a\|=1\big\}.$$ It was established in \cite[Proposition 2.2]{PeraltaAOT2018} that for each element $a\in S_{A^+}$, the condition $\Sph_{S_{A^+}}\bigl(\Sph_{S_{A^+}}(a)\bigr)=\{a\}$ suffices to conclude that $a$ is a projection. It was conjectured in the just quoted reference that the equivalence $$
{a \text{ is a non-zero projection}}
\quad\Longleftrightarrow\quad
{\Sph_{S_{A^+}}\bigl(\Sph_{S_{A^+}}(a)\bigr)=\{a\}}
$$ holds for all $a\in S_{A^+}$. Positive confirmations of this conjecture have been obtained in the following cases: $A= M_n(\mathbb{C})$ \cite[proof of Corollary on page 246]{Nagy2018}, $A=B(H)$ \cite[Corollary 2.4]{PeraltaAOT2018}, and when $A$ is a von Neumann algebra or an AW$^*$-algebra \cite[Lemma 4.1]{LeungNgWong2025}. In the recent paper \cite{PeraltaSaavedraUnital} we gave a complete proof of this conjecture when $A$ is a unital C$^*$-algebra or a unital JB$^*$-algebra. The conclusion for unital C$^*$-algebras was rediscovered in \cite[Corollary 3.6]{LeungNgWong2026}.

In this note, we provide a complete proof of the above conjecture for arbitrary (not necessarily unital) C$^*$- and JB$^*$-algebras (see Theorems~\ref{thm:cstar-projections} and~\ref{thm:jb-projections}). 

We have made a considerable effort to provide a complete and self-contained argument, organising the proof along one of the most direct possible routes. The approach developed in this note yields a substantially shorter proof of the implication ``double sphere $\Rightarrow$ projection'' established in \cite{PeraltaAOT2018}, as well as a simplified proof of the reverse implication, known under stronger hypotheses, that also applies to non-unital algebras. 

The main novelty in the argument that we present here rests on the following observation: given a non-zero projection $p$ and a norm-one positive element $b$ in a C$^*$-algebra $A$, the element $c=(\one-p)b(\one-p)$, computed in the unitisation, $\widetilde A$, of $A$, belongs to $A$ (even if $A$ is not unital). If $c\ne0$, then $d=\frac12p+\frac{c}{\|c\|}$
belongs to $S_{A^+}$ and satisfies $\|d-p\|=1$, whereas
$\|d-b\|<1$. Thus $b$ does not belong to the double sphere of $p$ unless
$c=0$. Positivity then gives $b \in pAp$, and a simple spectral argument finishes the proof (cf. Theorem~\ref{thm:cstar-projections}).

After dealing with the arguments leading to the proof of the desired conjecture in the setting of C$^*$-algebras, we subsequently explain separately why the same test elements and estimates work in an arbitrary (not necessarily unital) JB$^*$-algebra, including the exceptional case (see Theorem~\ref{thm:jb-projections}). 

In the final section, we explore a new metric characterisation of non-zero tripotents in a JB$^*$-triple $E$ using double spheres and the positive sphere of the inner ideal $E(a)$ generated by a single element $a$ in the unit sphere of $E$ and its natural structure of JB$^*$-algebra. We concretely show that a norm-one element $a$ in $E$ is a tripotent if, and only if, $\Sph_{S_{E(a)^{+}}}
\bigl(\Sph_{S_{E(a)^{+}}} (a)\bigr)=\{a\}$ (see Proposition~\ref{prop:range-positive-region}). More generally, the inner ideal $E(a)$ can be replaced by the positive region of $S_E$ associated with any tripotent $e\in E^{**}$ for which $a$ is positive in $E^{**}_2(e)$ (Theorem~\ref{thm:triple-positive-region}); for C$^*$-algebras this yields a metric characterisation of partial isometries (Corollary~\ref{cor:partial-isometries}).

\section{A metric characterisation of projections in general C\texorpdfstring{$^*$}{*}-algebras}
\label{sec:cstar}

Throughout this note, for a C$^*$-algebra $A$, we write $\widetilde A$ for its unitisation if $A$ is non-unital, and we set $\widetilde A=A$ otherwise. It follows from the definitions that $A$ is a two-sided closed ideal of $\widetilde A$. The unit of $\widetilde A$ will be denoted by $\one$. Note that we can always assume that $\one$ is also the unit
of the bidual, $A^{**}$, of $A$. 

We shall rely on two elementary facts. First, the distance between any two positive elements in the closed unit ball of a C$^*$-algebra $A$ is bounded by $1$. Namely, if $x$ and $y$ are positive
contractions in $A$, then $\|x-y\|\le1$, since $-\one\le x-y\le\one$ in $\widetilde{A}$. Moreover,
\begin{equation}\label{eq:state-endpoints}
\|x-y\|=1
\quad\Longleftrightarrow\quad
\text{there is a state $\varphi$ of $A$ with }
(\varphi(x),\varphi(y))\in\{(1,0),(0,1)\}.
\end{equation}
To see the forward implication, since $x-y$ is self-adjoint, the basic theory of C$^*$-algebras assures the existence of a state $\varphi\in A^*$ satisfying $|\varphi (x-y)| = \|x-y\|=1$ (cf. \cite[Theorem 3.3.6]{MurphyBook}), and hence $\varphi (x-y)= \pm 1$. Since $0\le\varphi(x),\varphi(y)\le 1$, the pair $(\varphi(x),\varphi(y))$ is $(1,0)$ or $(0,1)$. The converse implication follows by just applying $\varphi$ to $x-y$.

Second, for any state $\varphi$ of $A$ and any positive contraction $x$,
\begin{equation}\label{eq:square-endpoints}
\varphi(x)\in\{0,1\}
\quad\Longrightarrow\quad
\varphi(x^2)=\varphi(x).
\end{equation}
For $\varphi(x)=0$, this holds because $0\le x^2\le x$, and $\varphi$ preserves the partial order. For $\varphi(x)=1$,
the Cauchy--Schwarz inequality, applied to the extension of $\varphi$ to $\widetilde{A}$, gives
$1=\varphi(x)^2\le\varphi(x^2) \varphi(\one)\le\varphi(x)=1$, since $x^2\le x$. 

The next lemma will provide a simplified proof of \cite[Proposition 2.2]{PeraltaAOT2018}.

\begin{lemma}\label{lem:square}
Let $A$ be a C$^*$-algebra and let $a\in S_{A^+}$. Then
\[
a^2\in\Sph_{S_{A^+}}\bigl(\Sph_{S_{A^+}}(a)\bigr).
\]
\end{lemma}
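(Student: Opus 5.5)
We need to show two things: that $a^2\in S_{A^+}$, and that $\|a^2-z\|=1$ for every $z\in\Sph_{S_{A^+}}(a)$. The first is immediate. Since $a\ge 0$, we have $a^2\ge0$, and the C$^*$-identity gives $\|a^2\|=\|a\|^2=1$. So the whole content is the distance condition, which we handle through the two elementary facts \eqref{eq:state-endpoints} and \eqref{eq:square-endpoints} recorded above.

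Fix $z\in S_{A^+}$ with $\|a-z\|=1$. By \eqref{eq:state-endpoints}, applied to the positive contractions $a$ and $z$, there is a state $\varphi$ of $A$ with $(\varphi(a),\varphi(z))\in\{(1,0),(0,1)\}$. In particular $\varphi(a)\in\{0,1\}$, so \eqref{eq:square-endpoints} yields $\varphi(a^2)=\varphi(a)$. Therefore $(\varphi(a^2),\varphi(z))=(\varphi(a),\varphi(z))\in\{(1,0),(0,1)\}$.

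Both $a^2$ and $z$ are positive contractions, so the converse implication in \eqref{eq:state-endpoints} applies and gives $\|a^2-z\|=1$. Since $z$ was arbitrary in $\Sph_{S_{A^+}}(a)$, this shows $a^2\in\Sph_{S_{A^+}}(\Sph_{S_{A^+}}(a))$. The convention $\Sph_P(\emptyset)=P$ causes no trouble: if $\Sph_{S_{A^+}}(a)$ is empty, the condition is vacuous and membership follows from $a^2\in S_{A^+}$.

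There is no real obstacle. The only delicate point, the case $\varphi(a)=1$ in \eqref{eq:square-endpoints}, which needs Cauchy--Schwarz in $\widetilde A$, has already been handled in the text, so the proof is just these two applications of the recorded facts.
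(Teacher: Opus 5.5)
Your proof is correct and follows the paper's argument essentially verbatim: you check $a^2\in S_{A^+}$, obtain a state $\varphi$ from \eqref{eq:state-endpoints}, use \eqref{eq:square-endpoints} to get $\varphi(a^2)=\varphi(a)$, and apply the converse of \eqref{eq:state-endpoints}. Your remark on the empty-sphere convention is a harmless extra that the paper leaves implicit.
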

\begin{proof}
We know that $a^2\ge0$ and $\|a^2\|=1$. Take
$x\in\Sph_{S_{A^+}}(a)$. By \eqref{eq:state-endpoints}, there is a state $\varphi$ of
$A$ taking the values $(1,0)$ or
$(0,1)$ on $(a,x)$. By \eqref{eq:square-endpoints}, $\varphi$ takes the same values
on $(a^2,x)$. Therefore $\|a^2-x\|=1$, again by \eqref{eq:state-endpoints}. The arbitrariness of $x$ in $\Sph_{S_{A^+}}(a)$ implies that
$a^2\in\Sph_{S_{A^+}}\bigl(\Sph_{S_{A^+}}(a)\bigr)$.
\end{proof}

The next technical lemma shows that elements in the positive sphere of $p A p$ different from $p$ cannot belong to the double sphere around $p$ in $S_{(pAp)^+}$ or in $S_{A^+}$.

\begin{lemma}\label{lem:corner}
Let $A$ be a C$^*$-algebra, let $p\in A$ be a non-zero projection,
and let $b\in S_{(pAp)^+}$ with $b\ne p$. Then there is an element
$x\in S_{(pAp)^+}$ such that
$$
\|p-x\|=1,\hbox{ and }\  \|b-x\|<1.
$$
More precisely, if $m:=\min\sigma_{pAp}(b)$, then $0\le m<1$ and
\[
x:=\frac{b-mp}{1-m}
\qquad\text{satisfies}\qquad
\|p-x\|=1, \hbox{ and } \|b-x\|=m.
\]
\end{lemma}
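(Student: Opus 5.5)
We work entirely inside the unital C$^*$-algebra $pAp$, whose unit is $p$, and use continuous functional calculus for $b$ there. Since $b\in S_{(pAp)^+}$, the spectrum $\sigma_{pAp}(b)$ lies in $[0,1]$ and contains $1=\|b\|$. Hence $m=\min\sigma_{pAp}(b)$ satisfies $0\le m\le 1$.

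\textbf{Step 1: $m<1$.} If $m=1$, then $\sigma_{pAp}(b)=\{1\}$. Since $b$ is self-adjoint, this forces $b=p$, contradicting the hypothesis. So $0\le m<1$, and $x$ is well defined.

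\textbf{Step 2: $x\in S_{(pAp)^+}$.} Consider the affine function $f(t)=(t-m)/(1-m)$, so that $x=f(b)$. By spectral mapping, $\sigma_{pAp}(x)=f(\sigma_{pAp}(b))\subseteq[0,1]$. This set contains $f(m)=0$ and $f(1)=1$. Therefore $x\ge0$ and $\|x\|=1$.

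\textbf{Step 3: $\|p-x\|=1$.} We have $p-x=(1-f)(b)$. Its spectrum is contained in $[0,1]$ and contains $1-f(m)=1$. Since $p-x$ is self-adjoint, its norm equals its spectral radius, which is $1$.

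\textbf{Step 4: $\|b-x\|=m$.} Compute
\[
b-x=\frac{(1-m)b-b+mp}{1-m}=\frac{m(p-b)}{1-m}=g(b),\qquad g(t)=\frac{m(1-t)}{1-m}.
\]
On $[m,1]$, $g$ is nonnegative and decreasing, with maximum $g(m)=m$ attained at the spectral point $t=m$. Hence $\|b-x\|=\max_{t\in\sigma(b)}|g(t)|=m<1$.

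Norms computed in $pAp$ agree with those in $A$, because $pAp$ is a C$^*$-subalgebra. So both equalities also hold in $A$.

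There is no serious obstacle; the only point needing care is Step 1. It relies on the fact that a self-adjoint element of a unital C$^*$-algebra with spectrum $\{1\}$ equals the unit, which follows from $\|b-p\|=\max_{t\in\sigma(b)}|t-1|=0$.
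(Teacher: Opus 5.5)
Your proposal is correct and follows essentially the same route as the paper: functional calculus in the unital corner $pAp$ applied to $f(t)=\frac{t-m}{1-m}$, with $m<1$ because spectrum $\{1\}$ forces $b=p$. The only cosmetic difference is in the last step, where the paper uses the identity $b-x=m(p-x)$ to read off $\|b-x\|=m\|p-x\|=m$, whereas you compute $\|g(b)\|$ with $g(t)=\frac{m(1-t)}{1-m}$ directly.
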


\begin{proof} The corner $pAp$ is a C$^*$-subalgebra of $A$ with unit $p$. Let $m:=\min\sigma_{pAp}(b)$. Since $b$ is a positive norm-one element with $b\neq p$, Gelfand theory gives $1\in \sigma_{pAp}(b)\subseteq[0,1]$ and $0\leq m\le 1$; moreover $m<1$, since $m=1$ would give $\sigma_{pAp}(b)=\{1\}$ and hence $b=p$. 

The affine function $f(t)=\frac{t-m}{1-m}$ maps the spectrum of $b$ strictly increasingly into $[0,1]$, sending the endpoints $m$ and $1$ to $0$ and $1$, respectively. Applying the continuous functional calculus within $pAp$ yields $x = f(b) \in pAp$, $x\ge0$, $\|x\|=1$, $0 \in \sigma_{pAp}(x)$ and $\|p-x\|= \max_{t\in\sigma_{pAp}(b)}\frac{1-t}{1-m} = 1$ (note that $p-x=\frac{p-b}{1-m}$). Finally,
\[
b=mp+(1-m)x,\qquad b-x=m(p-x),
\]
so $\|b-x\|=m<1$. Note that the argument also works for $m=0$, in which case $x=b$.
\end{proof}

Just one technical observation stands between us and the main results of this section.

\begin{lemma}\label{lem:detector}
Let $A$ be a C$^*$-algebra, let $p\in A$ be a non-zero projection, and let $b\in S_{A^+}$. Suppose that the element $c = b - p b - b p + p b p\in A$ is non-zero. Then $$d:=\frac12p+\frac{c}{\|c\|} \hbox{ lies in  } S_{A^+}, \hbox{ and satisfies }\|d-p\|=1 \hbox{ and } \|d-b\|<1.$$
\end{lemma}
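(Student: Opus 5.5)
Note first that $c=(\one-p)b(\one-p)$, computed in $\widetilde A$. Since $A$ is an ideal of $\widetilde A$, we get $c\in A$, and $c\ge0$ because $b\ge 0$. Put $q=\one-p$, so that $pq=qp=0$, $c=qcq$ and $pc=cp=0$. Then $p$ and $c/\|c\|$ are positive elements with orthogonal supports (they live in the corners $p\widetilde A p$ and $q\widetilde A q$). Hence $d=\frac12 p+c/\|c\|$ is positive and $\|d\|=\max\{\frac12,1\}=1$. To see this, use that for orthogonal commuting positive elements the norm of the sum is the maximum of the norms; the C$^*$-subalgebra they generate is commutative and splits as a direct sum. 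So $d\in S_{A^+}$.

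For $\|d-p\|$ we compute $d-p=-\frac12 p+c/\|c\|$. This is again an orthogonal sum, with norm $\max\{\frac12,1\}=1$. (Equivalently, $d-p\ge -\one$ trivially, and $\|c/\|c\|\|=1$ forces the norm to be $1$.)

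The main step is $\|d-b\|<1$. We know $-\one\le d-b\le\one$, so by \eqref{eq:state-endpoints} it suffices to rule out a state $\varphi$ of $A$ with $(\varphi(d),\varphi(b))\in\{(1,0),(0,1)\}$. The subtlety is that the norm is attained in $A$, but to be safe one argues with states on $\widetilde A$, which restrict to positive functionals of norm at most $1$ on $A$. More robustly, the norm of the self-adjoint element $d-b$ is attained at a state $\varphi$ of $\widetilde A$, and we treat the two cases separately.

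\emph{Case $\varphi(d)=0,\ \varphi(b)=1$.} Since $d\ge\frac12 p\ge0$, we get $\varphi(p)=0$. Cauchy--Schwarz then gives $\varphi(px)=\varphi(xp)=0$ for all $x$, so $\varphi(b)=\varphi(qbq)=\varphi(c)$. Also $\varphi(c)=0$ because $0\le c/\|c\|\le d$. This contradicts $\varphi(b)=1$.

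\emph{Case $\varphi(d)=1,\ \varphi(b)=0$.} From $\varphi(b)=0$ and $b\ge0$, Cauchy--Schwarz gives $\varphi(bx)=\varphi(xb)=0$ for all $x\in \widetilde A$. Hence $\varphi(c)=\varphi(b-pb-bp+pbp)=\varphi(pbp)$. Now $\varphi(pbp)\le \varphi(b^{1/2}\ldots)$ is awkward, so instead write $pbp=(b^{1/2}p)^*(b^{1/2}p)$; then $\varphi(pbp)=\|b^{1/2}p\xi_\varphi\|^2$ in the GNS representation. Since $\varphi(b)=0$ means $b^{1/2}\xi_\varphi=0$, the cleaner route is to use $\varphi(c)=\varphi(qbq)$ together with $\varphi(d)=1$.

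Write $\varphi(d)=\frac12\varphi(p)+\varphi(c)/\|c\|$. Since $d\le\one$, we have $\varphi(d)=1$ iff $d\xi=\xi$ in the GNS space, where $\xi=\xi_\varphi$. Decompose $\xi=p\xi+q\xi$. Then $d\xi=\frac12 p\xi+(c/\|c\|)q\xi$. Matching components gives $p\xi=\frac12 p\xi$, so $p\xi=0$, and $(c/\|c\|)q\xi=q\xi$. Then $\varphi(c)=\langle c\xi,\xi\rangle=\|c\|\,\|q\xi\|^2=\|c\|>0$. But $\langle c\xi,\xi\rangle=\langle bq\xi,q\xi\rangle=\langle b\xi,\xi\rangle=\varphi(b)=0$, using $q\xi=\xi$. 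This is a contradiction.

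Both cases are therefore impossible, so $\|d-b\|<1$. The expected obstacle is the non-unital bookkeeping: one must ensure the norming functional can be taken as a state of $\widetilde A$, which holds since $\widetilde A$ is a unital C$^*$-algebra containing $A$ isometrically. One must also check that the GNS argument uses only $\varphi(\one)=1$.
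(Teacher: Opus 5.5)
Your proof is correct and follows essentially the paper's argument: orthogonality of $p$ and $c$ gives $\|d\|=\|d-p\|=1$, and $\|d-b\|=1$ is ruled out by a norming state $\varphi$ with $\varphi(p)=0$, so that Cauchy--Schwarz gives $\varphi(c)=\varphi(b)$, which contradicts the endpoint values. The only deviation is the case $\varphi(d)=1$: where you use a GNS eigenvector argument, the paper uses the one-line order bound $d\le \one-\frac12 p$ (from $0\le c/\|c\|\le \one-p$), which forces $\varphi(p)=0$ just as the lower bound $d\ge\frac12 p$ does in your first case.
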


\begin{proof}
The identity $c=b-pb-bp+pbp = (\mathbf{1}-p)b(\mathbf{1}-p)$
shows that $c\in A^+$. We also have $\|c\|\in(0,1]$, because $\|c\|\le\|b\|=1$. Since $pc=cp=0$, $p$ and $c$ are orthogonal, so
$$
\|d\|= \max\left\{\tfrac12 \|p\| , \left\| \tfrac{c}{\|c\|} \right\|\right\}=\max\bigl\{\tfrac12,1\bigr\}=1,\hbox{ and }
\|d-p\|=\left\|-\tfrac12p+\tfrac{c}{\|c\|}\right\|=\max\bigl\{\tfrac12,1\bigr\}=1.
$$
Let us finally show that $\|d-b\|<1$. Since $\frac{c}{\|c\|}$ is a positive norm-one element of the corner $(\one-p)\widetilde A(\one-p)$, whose unit is $\one-p$, we have $0\le \frac{c}{\|c\|}\le \one - p$, and thus
\begin{equation}\label{eq:detector-bounds}
\tfrac12p\le d\le\tfrac12p+(\one - p)=\one-\tfrac12p.
\end{equation} Suppose that $\|d-b\|=1$. By applying \eqref{eq:state-endpoints} we obtain a state $\varphi$ of $A$ such that 
\begin{equation}\label{eq:state-dichotomy}
(\varphi(d),\varphi(b))=(1,0)\quad\text{or}\quad(\varphi(d),\varphi(b))=(0,1).
\end{equation}
In either case, \eqref{eq:detector-bounds} implies that $\varphi(p)=0$, using the upper bound for $d$ if $\varphi(d)=1$, and the lower bound if $\varphi(d)=0$. The Cauchy--Schwarz inequality then gives
$\varphi(pb)=\varphi(bp)=0$, while
$0\le pbp\le p$ implies that $\varphi(pbp)=0$. Thus $\varphi(c)=\varphi(b)$ and
$\varphi(d)=\frac{\varphi(b)}{\|c\|}$.
This contradicts both alternatives in \eqref{eq:state-dichotomy}. Since $d$ and $b$ are positive contractions, their distance is at most one, and we must necessarily have $\|d-b\|<1$.
\end{proof}

\begin{remark} In Lemma~\ref{lem:detector}, the coefficient $\frac12$ in $d$ could be replaced by any $\lambda\in(0,1)$. Namely, by considering the inequalities $\lambda p\le d\le\one-(1-\lambda) p$, the same
argument remains valid. In particular, any state $\varphi$ taking the
value $0$ or $1$ at $d$ must vanish at $p$.
\end{remark}

In our main result, we not only prove the desired conjecture for not necessarily unital C$^*$-algebras, but we also give a simplified proof of the implication ``double sphere $\Rightarrow$ projection'' originally established in \cite[Proposition 2.2]{PeraltaAOT2018}.

\begin{theorem}\label{thm:cstar-projections} Let $A$ be a C$^*$-algebra, and let
$a$ be a norm-one positive element in $A$. Then the following statements are equivalent:
\begin{enumerate}[$(i)$]
	\item $a$ is a non-zero projection;
	\item $\Sph_{S_{A^+}}\bigl(\Sph_{S_{A^+}}(a)\bigr)=\{a\}$.
\end{enumerate}
\end{theorem}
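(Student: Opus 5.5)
For $(ii)\Rightarrow(i)$ we use Lemma~\ref{lem:square}. Suppose $\Sph_{S_{A^+}}\bigl(\Sph_{S_{A^+}}(a)\bigr)=\{a\}$. Lemma~\ref{lem:square} says that $a^2$ belongs to this double sphere, so $a^2=a$. Hence $a$ is a projection, and it is non-zero because $\|a\|=1$.

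For $(i)\Rightarrow(ii)$, let $a=p$ be a non-zero projection. The double sphere always contains $p$, so it suffices to take $b\in\Sph_{S_{A^+}}\bigl(\Sph_{S_{A^+}}(p)\bigr)$ and show $b=p$. First we show $b\in pAp$. Set $c=(\one-p)b(\one-p)=b-pb-bp+pbp\in A$. If $c\neq0$, Lemma~\ref{lem:detector} gives $d\in S_{A^+}$ with $\|d-p\|=1$, so $d\in\Sph_{S_{A^+}}(p)$, and $\|d-b\|<1$. This contradicts the choice of $b$, so $c=0$. Writing $c=\bigl(b^{1/2}(\one-p)\bigr)^*\bigl(b^{1/2}(\one-p)\bigr)$ in $\widetilde A$, the C$^*$-identity gives $b^{1/2}(\one-p)=0$. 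Therefore $b(\one-p)=0$, and taking adjoints, $(\one-p)b=0$. Thus $b=pbp\in S_{(pAp)^+}$.

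Next suppose, towards a contradiction, that $b\neq p$. Lemma~\ref{lem:corner} provides $x\in S_{(pAp)^+}\subseteq S_{A^+}$ with $\|p-x\|=1$ and $\|b-x\|<1$. The norm of $pAp$ is the restriction of the norm of $A$, and positivity in $pAp$ agrees with positivity in $A$, so $x\in\Sph_{S_{A^+}}(p)$. Then $\|b-x\|<1$ contradicts $b$ lying in the double sphere. Hence $b=p$.

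The only delicate step is the passage from $c=0$ to $b\in pAp$, which needs the factorisation through $b^{1/2}$ in $\widetilde A$. Everything else is a direct application of the preceding lemmas.
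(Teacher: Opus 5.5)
Your proposal is correct and follows essentially the same route as the paper. For $(ii)\Rightarrow(i)$ you use Lemma~\ref{lem:square}; for $(i)\Rightarrow(ii)$ you use Lemma~\ref{lem:detector} to force $(\one-p)b(\one-p)=0$, then the factorisation through $b^{1/2}$ in $\widetilde A$ to get $b\in pAp$, and finally Lemma~\ref{lem:corner} to rule out $b\neq p$.
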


\begin{proof}
If $(ii)$ holds, Lemma~\ref{lem:square} yields $a^2\in\Sph_{S_{A^+}}\bigl(\Sph_{S_{A^+}}(a)\bigr)=\{a\}$, that is, $a^2=a$. Since $a$ is positive, Gelfand theory then implies that $a$ is a non-zero projection. 

Conversely, let $p\in A$ be a non-zero projection and take
$b\in\Sph_{S_{A^+}}\bigl(\Sph_{S_{A^+}}(p)\bigr)$.
If the element $c = b - p b - b p + p b p \in A$ is non-zero, Lemma~\ref{lem:detector} provides an element
$d\in\Sph_{S_{A^+}}(p)$ with $\|d-b\|<1$, which contradicts that $b\in\Sph_{S_{A^+}}\bigl(\Sph_{S_{A^+}}(p)\bigr)$. Consequently $c=0$. 

Working in the unitization of $A$ if necessary, we obtain $c=(\one-p)b(\one-p)=0.$ For convenience, we henceforth write $p^\perp=\one-p$. Since $b$ is a positive element, the equality $0=p^\perp b p^\perp=(b^{1/2}p^\perp)^*(b^{1/2}p^\perp)$
implies $b^{1/2}p^\perp=0$. Hence $bp^\perp=p^\perp b=0$, so $b=pbp\in pAp$.
If $b\ne p$, Lemma~\ref{lem:corner} assures the existence of an element
$x\in S_{(pAp)^+}\subseteq S_{A^+}$ with $\|p-x\|=1$ and
$\|b-x\|<1$, which again contradicts the choice of $b$. Thus $b=p$ and $\{p\} = \Sph_{S_{A^+}}\bigl(\Sph_{S_{A^+}}(p)\bigr)$.
\end{proof}

\begin{remark}\label{rem:scalar}
Note that the convention $\Sph_P(\emptyset)=P$ is needed in the statement of Theorem~\ref{thm:cstar-projections}. For
instance, if $A=\C$, then $S_{A^+}=\{\one\}$ and the sphere
$\Sph_{S_{A^+}}(\one)$ is empty, so the double sphere around $\one$ in 
$S_{A^+}$ is 
$S_{A^+} = \{\one\}$, as the theorem requires.
\end{remark}

\section{A metric characterisation of projections in general JB\texorpdfstring{$^*$}{*}-algebras}
\label{sec:jb}

We begin by observing that every C$^*$-algebra $A$ can be regarded as a (not necessarily associative) JB$^*$-algebra keeping the original norm and involution, and replacing the associative product by the Jordan product given by $a\circ b = \frac12 (a b +b a)$ ($a,b\in A$). Formally speaking, a \emph{JB$^*$-algebra} is a complex Banach space $\mathfrak{A}$ together with a bilinear and commutative product ($(a,b)\mapsto a\circ b$) satisfying $\|a\circ b\|\le\|a\|\,\|b\|$, and an algebra involution ($a\mapsto a^*$), such that the following axioms hold: 
\begin{enumerate}[$(1)$]
	\item $(a\circ b)\circ b^2  = (a\circ b^2)\circ b$\ \ \ \ ($\forall a,b\in \mathfrak{A}$) \hfill (\emph{Jordan identity}),
	\item If for each $a\in \mathfrak{A}$ the symbol $U_{a}$ stands for the quadratic operator given by $U_a (b) = 2 (a\circ b)\circ a - a^2 \circ b$  ($\forall b\in \mathfrak{A}$), we have $\left\| U_{a} (a^*) \right\| = \|a\|^3$ ($\forall a\in \mathfrak{A}$)  \hfill (\emph{Gelfand--Naimark axiom}). 
\end{enumerate} It should be noted that the class of JB$^*$-algebras is strictly larger than the class of C$^*$-algebras. For the standard definitions and properties of JB$^*$-algebras employed in this paper, we refer to \cite{AlfsenShultz2003,HancheOlsenStormer,Cabrera-Rodriguez-vol1} and to the detailed exposition in \cite[\S 2]{PeSa2025} and \cite[\S 3]{PeraltaSaavedraUnital}. 

Let $\mathfrak{A}$ be a JB$^*$-algebra. We say that $\mathfrak{A}$ is unital if there exists $\one\in \mathfrak{A}$ satisfying $\one \circ a =a$ for all $a\in \mathfrak{A}$. A projection in $\mathfrak{A}$ is a self-adjoint idempotent. For a projection
$p\in \mathfrak{A}$, the subalgebra $U_p(\mathfrak{A})$ is a unital JB$^*$-subalgebra with unit $p$. An element $a$ in $\mathfrak{A}$ is called positive ($a\geq 0$ in short) if $a =a^*$ and its Jordan spectrum is contained in $\mathbb{R}_0^+$; equivalently, $a$ is the square of an element in $\mathfrak{A}_{\mathrm{sa}}:=\{a\in \mathfrak{A}: a^* =a \}$ (see \cite[3.2.4 and 3.3.3]{HancheOlsenStormer}). We shall write $\mathfrak{A}^+$ for the set of
positive elements in $\mathfrak{A}$. As in the associative setting, given a JB$^*$-algebra $\mathfrak{A}$ we shall write $S_{\mathfrak{A}^+}$ for its positive unit sphere, that is, the set of all norm-one positive elements in $\mathfrak{A}$. 

The second dual, $\mathfrak{A}^{**},$ of each JB$^*$-algebra $\mathfrak{A}$, admits a structure of unital JB$^*$-algebra in such a way that $\mathfrak{A}$ embeds isometrically as a weak$^*$-dense JB$^*$-subalgebra inside it. If $\mathfrak{A}$ is not unital, the symbol $\one$ will stand for the unit in $\mathfrak{A}^{**}$.

Although there are examples of exceptional JB$^*$-algebras which cannot be embedded as JB$^*$-subalgebras of any C$^*$-algebra, the celebrated Shirshov--Cohn theorem (see \cite[Theorems 2.4.14 and 7.2.5]{HancheOlsenStormer} or \cite[Section~2.2]{LemmensRoelandsWortel}) assures that the JB$^*$-subalgebra generated by any two self-adjoint elements (and the unit element) in an arbitrary JB$^*$-algebra is always a JB$^*$-subalgebra of some C$^*$-algebra, and the C$^*$-algebra can be chosen with the same unit. Since this representation is only local, it does not transfer the double-sphere condition; we use it solely to verify norm estimates for finitely many explicit elements.

\begin{lemma}\label{lem:jb-detector}
Let $\mathfrak{A}$ be a JB$^*$-algebra, let $p\in \mathfrak{A}$ be a non-zero projection, and let $b\in S_{\mathfrak{A}^+}$. Then, the element
\begin{equation}\label{eq:Jordan-c-definition}
c:=U_{\one-p}(b) =b-2p\circ b+U_p(b)\in \mathfrak{A}
\end{equation}
is positive in $\mathfrak{A}$ (note that $c\in\mathfrak{A}$ by the last expression, even if $\one\notin\mathfrak{A}$), and if $c\ne 0$, by setting $
d:=\frac12p+\frac{c}{\|c\|},$ we get another element in $S_{\mathfrak{A}^+}$ satisfying $\|d-p\|=1$ and $\|d-b\|<1$.
If $c=0$, then $b=U_p(b)$ and $0\le b\le p$.
\end{lemma}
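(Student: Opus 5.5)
The plan is to reduce every assertion to a computation inside a C$^*$-algebra, using the Shirshov--Cohn theorem. All elements involved ($p$, $b$, $c$, $d$, $\one$) lie in the JB$^*$-subalgebra of $\mathfrak{A}^{**}$ generated by the two self-adjoint elements $p,b$ and the unit $\one$. That subalgebra is a JB$^*$-subalgebra of some C$^*$-algebra $B$ with the same unit $\one$. The identification is isometric, since injective Jordan $^*$-homomorphisms between JB$^*$-algebras are isometric. Positivity is also preserved both ways, because Jordan spectrum and spectrum in $B$ agree for self-adjoint elements of a JB$^*$-subalgebra. In $B$ we have $U_x(y)=xyx$. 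Hence $c=(\one-p)b(\one-p)$, and the identity $U_{\one-p}(b)=b-2p\circ b+U_p(b)$ is the expansion already used in Lemma~\ref{lem:detector}. This expression shows $c\in\mathfrak{A}$, because $\mathfrak{A}$ is closed under $\circ$ and $U_p$.

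\textbf{Positivity of $c$.} In $B$ we have $c=(b^{1/2}(\one-p))^*(b^{1/2}(\one-p))\ge0$, so $c$ is positive in $\mathfrak{A}$ by the remark above.

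\textbf{The case $c\neq0$.} We cannot simply quote Lemma~\ref{lem:detector}, because $B$ need not contain $p$ as an element of a subalgebra "$A$" with the right properties. It does, however: $p,b\in B$ and $B$ is unital, so Lemma~\ref{lem:detector} applies verbatim to the C$^*$-algebra $B$. It gives $d\in S_{B^+}$, $\|d-p\|=1$ and $\|d-b\|<1$. The element $d=\frac12p+c/\|c\|$ lies in $\mathfrak{A}$ and in the subalgebra generated by $p,b,\one$. Since the embedding preserves norms and positivity, $d\in S_{\mathfrak{A}^+}$, and both norm relations hold in $\mathfrak{A}$.

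The main obstacle is checking that norms computed in $B$ coincide with norms in $\mathfrak{A}$ (or $\mathfrak{A}^{**}$). This follows from isometry of the embedding, together with $\mathfrak{A}\subseteq\mathfrak{A}^{**}$ isometrically. It must be stated carefully, since the Shirshov--Cohn algebra lives in $\mathfrak{A}^{**}$ when $\mathfrak{A}$ is non-unital.

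\textbf{The case $c=0$.} In $B$, $(\one-p)b(\one-p)=0$ gives $b^{1/2}(\one-p)=0$, hence $b(\one-p)=(\one-p)b=0$. Therefore $b=pbp=U_p(b)$. Since $0\le b\le\|b\|\one=\one$, compressing by $p$ yields $0\le b=pbp\le p$. Both relations transfer back to $\mathfrak{A}$: the first is an algebraic identity, and the second says $p-b\ge0$ in $B$, hence in the JB$^*$-subalgebra and in $\mathfrak{A}$.
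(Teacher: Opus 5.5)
Your proposal is correct and follows essentially the same route as the paper: you embed the JB$^*$-subalgebra of $\mathfrak{A}^{**}$ generated by $p$, $b$, $\one$ into a unital C$^*$-algebra via Shirshov--Cohn, apply Lemma~\ref{lem:detector} there, transfer positivity and norms back, and handle the case $c=0$ via $b^{1/2}(\one-p)=0$. The sentence saying you ``cannot simply quote'' Lemma~\ref{lem:detector} contradicts itself and should be deleted, since you then correctly apply that lemma to the unital C$^*$-algebra $B$.
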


\begin{proof} Let $\mathfrak{B}$ denote the JB$^*$-subalgebra of $\mathfrak{A}^{**}$ generated by $p,b$ and $\one$. By the Shirshov--Cohn theorem (see \cite[Theorem 7.2.5]{HancheOlsenStormer}), there exists a unital C$^*$-algebra $A$ containing $\mathfrak{B}$ as JB$^*$-subalgebra and admitting $\one$ as the unit element. The associative C$^*$-product on $A$ will be denoted by juxtaposition. 
	
Note that $b\in S_{\mathfrak{A}^+}\cap \mathfrak{B} = S_{\mathfrak{B}^+}\subseteq S_{A^+}$.  Lemma~\ref{lem:detector}, applied to $A$, assures that the element $c=(\one-p)b(\one-p) = U_{\one-p} (b) =b-2p\circ b+U_p(b) \geq 0$ in $A$ (equivalently, in $\mathfrak{B}$ and in $\mathfrak{A}$ because $c$ trivially lies in $\mathfrak{B}$). Furthermore, by the same lemma, if $c\neq 0$, the element $d:=\frac12p+\frac{c}{\|c\|}$ lies in $S_{A^+}$ and satisfies $\|d-p\|_{A}=1$ and $\|d-b\|_{A}<1$. Having in mind that $p,b,c,$ and $d$ all lie in $\mathfrak{B}\cap \mathfrak{A}$, it trivially holds that $d\in S_{\mathfrak{A}^+}$ with $\|d-p\|=\|d-p\|_{\mathfrak{B}}=\|d-p\|_{A}=1$ and $\|d-b\|=\|d-b\|_{\mathfrak{B}}=\|d-b\|_{A}<1.$  Clearly, if $c=0$, then $b^{1/2}(\one-p)=0$ in $A$ (as in the proof of Theorem~\ref{thm:cstar-projections}), and thus $b= p b p = U_p (b) \leq U_p (\one) = p$.
\end{proof}

\begin{theorem}\label{thm:jb-projections}
Let $\mathfrak{A}$ be a JB$^*$-algebra, and let
$a\in S_{\mathfrak{A}^+}$. The following statements are equivalent: \begin{enumerate}[$(i)$] \item $a$ is a non-zero projection;
\item $\Sph_{S_{\mathfrak{A}^+}}\bigl(\Sph_{S_{\mathfrak{A}^+}}(a)\bigr)=\{a\}$.
\end{enumerate}
\end{theorem}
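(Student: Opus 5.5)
We mirror the proof of Theorem~\ref{thm:cstar-projections}, using the Shirshov--Cohn theorem only to transfer finitely many norm estimates from the associative setting.

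\emph{Implication $(ii)\Rightarrow(i)$.} We first establish a Jordan version of Lemma~\ref{lem:square}: $a^2\in \Sph_{S_{\mathfrak{A}^+}}\bigl(\Sph_{S_{\mathfrak{A}^+}}(a)\bigr)$. Fix $x\in \Sph_{S_{\mathfrak{A}^+}}(a)$. Let $\mathfrak{B}$ be the JB$^*$-subalgebra of $\mathfrak{A}^{**}$ generated by $a$, $x$ and $\one$, and embed it in a unital C$^*$-algebra $A$ with the same unit. The Jordan square $a\circ a$ coincides with the associative square in $A$. Positivity and norms of elements of $\mathfrak{B}$ are the same whether computed in $\mathfrak{A}$, $\mathfrak{B}$ or $A$. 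Hence $\|a-x\|_A=1$, and the state argument in the proof of Lemma~\ref{lem:square}, run inside $A$, gives $\|a^2-x\|_A=1$. This is the same as $\|a^2-x\|=1$ in $\mathfrak{A}$. Hypothesis $(ii)$ then forces $a^2=a$. Since $a$ is self-adjoint and $\|a\|=1$, $a$ is a non-zero projection.

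\emph{Implication $(i)\Rightarrow(ii)$.} Let $p$ be a non-zero projection and take $b\in\Sph_{S_{\mathfrak{A}^+}}\bigl(\Sph_{S_{\mathfrak{A}^+}}(p)\bigr)$. Put $c=U_{\one-p}(b)$. If $c\neq0$, Lemma~\ref{lem:jb-detector} gives $d\in\Sph_{S_{\mathfrak{A}^+}}(p)$ with $\|d-b\|<1$, a contradiction. So $c=0$, and Lemma~\ref{lem:jb-detector} yields $b=U_p(b)\in U_p(\mathfrak{A})$ with $0\le b\le p$.

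It remains to exclude $b\ne p$ through a Jordan version of Lemma~\ref{lem:corner}. The subalgebra $U_p(\mathfrak{A})$ is a unital JB$^*$-algebra with unit $p$. The JB$^*$-subalgebra $C$ generated by $b$ and $p$ is then associative, so it is a commutative unital C$^*$-algebra, isometrically $C(\sigma(b))$. Let $m=\min\sigma(b)$; then $m<1$ because $b\ne p$. Set $x=(b-mp)/(1-m)\in C\subseteq \mathfrak{A}$. The computation in Lemma~\ref{lem:corner} is purely spectral and takes place inside $C$. It gives $x\in S_{\mathfrak{A}^+}$, $\|p-x\|=1$ and $\|b-x\|=m<1$, since norms in $C$ agree with norms in $\mathfrak{A}$. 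This contradicts the choice of $b$, so $b=p$.

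\emph{Main obstacle.} The delicate point is making sure that every norm and positivity claim transfers correctly between $\mathfrak{A}$, the local C$^*$-algebra and the corner $U_p(\mathfrak{A})$. This holds because JB$^*$-subalgebras are isometrically embedded and positivity is determined by the spectrum, which is computed in the generated subalgebra. The remaining steps are direct reruns of the C$^*$ arguments.
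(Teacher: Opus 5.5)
Your proposal is correct and follows the paper's proof essentially step by step. You use a Shirshov--Cohn transfer of Lemma~\ref{lem:square} for $(ii)\Rightarrow(i)$, then Lemma~\ref{lem:jb-detector} to force $U_{\one-p}(b)=0$, and finally the spectral argument of Lemma~\ref{lem:corner} in the commutative C$^*$-algebra generated by $b$ and $p$ inside $U_p(\mathfrak{A})$. The only cosmetic difference is that you generate the local subalgebra inside $\mathfrak{A}^{**}$ together with $\one$, whereas the paper generates it inside $\mathfrak{A}$ from $x$ and $a$ alone; either choice works.
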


\begin{proof} We shall first check that 
\begin{equation}\label{eq lemma square Jordan} 
	b^2\in\Sph_{S_{\mathfrak{A}^+}}\bigl(\Sph_{S_{\mathfrak{A}^+}}(b)\bigr)
	\qquad(\forall b\in S_{\mathfrak{A}^+}).
\end{equation}
Indeed, for $x\in\Sph_{S_{\mathfrak{A}^+}}(b)$, let $\mathfrak{B}$ denote the JB$^*$-subalgebra of $\mathfrak{A}$ generated by $x$ and $b$. By the Shirshov--Cohn theorem, there exists a unital C$^*$-algebra $A$ containing $\mathfrak{B}$ as JB$^*$-subalgebra. Note that $x,b,b^2\in S_{\mathfrak{B}^+}\subseteq S_{A^+}$ with $\|b-x\|=1.$ Therefore $x\in  \Sph_{S_{{A}^+}}(b)$. Now, Lemma~\ref{lem:square} assures that $1=\|b^2-x\|_{A} = \|b^2-x\|_{\mathfrak{B}} = \|b^2-x\|_{\mathfrak{A}}$. The arbitrariness of $x\in\Sph_{S_{\mathfrak{A}^+}}(b)$ concludes the proof of \eqref{eq lemma square Jordan}.

$(ii)\Rightarrow (i)$. It follows from \eqref{eq lemma square Jordan} that $a^2\in \Sph_{S_{\mathfrak{A}^+}}\bigl(\Sph_{S_{\mathfrak{A}^+}}(a)\bigr)=\{a\}$, so $a^2 = a$, and $a$ is a non-zero projection since $\|a\|=1$.

$(i)\Rightarrow (ii)$. Let $p\in \mathfrak{A}$ be a non-zero projection and let $b\in\Sph_{S_{\mathfrak{A}^+}}\bigl(\Sph_{S_{\mathfrak{A}^+}}(p)\bigr)$.
Lemma~\ref{lem:jb-detector} forces $U_{\mathbf{1}-p}(b)=0$, as otherwise we could construct an element $d \in \Sph_{S_{\mathfrak{A}^+}}(p)$ with $\|d-b\|<1$, which is impossible. Therefore, we must have $b=U_p(b)$ and $0\le b\le p$.

It remains to work in the unital JB$^*$-algebra $U_p(\mathfrak{A})$, whose unit is $p$. Let $C$ be the JB$^*$-subalgebra of $U_p(\mathfrak{A})$ generated by $b$ and $p$. It is known that $C$ is an associative JB$^*$-algebra \cite[3.2.3 and 3.2.4]{HancheOlsenStormer}, and thus a commutative unital C$^*$-algebra under its own product.

If $b\ne p$, define
$m := \min\sigma_{U_p(\mathfrak{A})}(b) = \min \sigma_{C}(b)$. Continuous functional calculus in $C$ (cf. \cite[3.2.4]{HancheOlsenStormer}) or a direct application of Lemma~\ref{lem:corner} to $p,b\in C$ gives $$
0\le m<1,\qquad
x:=\frac{b-mp}{1-m}\in S_{C^+} \subseteq S_{\mathfrak{A}^+},\qquad \|p-x\|=1,\qquad \hbox{and}\qquad \|b-x\|=m<1,$$
which contradicts the choice of $b\in\Sph_{S_{\mathfrak{A}^+}}\bigl(\Sph_{S_{\mathfrak{A}^+}}(p)\bigr)$. Therefore $b=p,$ and consequently  $\Sph_{S_{\mathfrak{A}^+}}\bigl(\Sph_{S_{\mathfrak{A}^+}}(p)\bigr)\subseteq \{p\}$, while the reverse inclusion is clear.
\end{proof}

\section{Local cones and tripotents in JB\texorpdfstring{$^*$}{*}-triples}
\label{sec:triples}

C$^*$- and JB$^*$-algebras belong to a strictly wider class of complex Banach spaces known as JB$^*$-triples. For a C$^*$-algebra, the triple product is given by
\begin{equation}\label{eq C*-triple product} \{a,b,c\}=\frac{1}{2}(ab^*c+cb^*a),
\end{equation}
whereas for a JB$^*$-algebra, it is given by $$\{a,b,c\}=(a\circ b^*)\circ c
+(c\circ b^*)\circ a
-(a\circ c)\circ b^*.$$ More generally, a complex Banach space $E$ is a JB$^*$-triple if and only if its open unit ball is a bounded symmetric domain \cite{Kaup83riemann}. In this case, the triple product \((a,b,c)\mapsto\{a,b,c\}\) is uniquely determined by the Banach space structure \cite[Proposition 5.5]{Kaup83riemann}, and satisfies the following axioms:\begin{enumerate}[(T1)]\item The triple product is symmetric and bilinear in the outer variables and conjugate linear in the middle,
\item $L(x,y)  \{u,v,w\} = \{{L(x,y)u},v,w\} - \{ u,{L(y,x)v},w\} + \{u,v,{L(x,y)w}\},$ where $L(x,y) (z) = \{x,y,z\}$ ($x,y,z\in E$),
\item $\|L(x,x)\| = \|x\|^2$ and $L(x,x)$ is a hermitian operator on $E$ with non-negative spectrum. 
\end{enumerate} 

Other examples of JB$^*$-triples include the Cartan factors $B(H,K)$ of bounded linear operators between complex Hilbert spaces $H$ and $K$, spin factors, and the exceptional Cartan factors $H_3(\mathbb{O})$ and $M_{1,2}(\mathbb{O})$, consisting of hermitian \(3\times3\) matrices and \(1\times2\) matrices, respectively, over the complex octonions \(\mathbb{O}\) (see \cite[\S 2.2.27 and 4.1.3]{Cabrera-Rodriguez-vol1} for more details and background).

An element $e$ in a JB$^*$-triple $E$ is called a tripotent if $\{e,e,e\}=e$. If a C$^*$-algebra $A$ is regarded as a JB$^*$-triple with respect to the triple product in \eqref{eq C*-triple product}, the tripotents in $A$ are precisely the partial isometries. Every tripotent $e\in E$ induces a Peirce decomposition $$
E=E_2(e)\oplus E_1(e) \oplus E_0(e),$$
where $E_j(e)$ denotes the $\frac{j}{2}$-eigenspace of the operator $
L(e,e)(x)=\{e,e,x\}$. The Peirce-$2$ subspace $E_2(e)$ is a JB$^*$-algebra with product $x\circ_e y=\{x,e,y\}$ and involution $x^{*_e}=\{e,x,e\}$ (cf. \cite[Corollary 4.2.30]{Cabrera-Rodriguez-vol1}). For a tripotent $e$ in a JB$^*$-triple $E$, writing $Q(e)(x)=\{e,x,e\}$, the Peirce projections of $E$ onto the corresponding Peirce subspaces are given by the following identities:
\begin{enumerate}[$\bullet$]
\item $P_2(e) = Q(e)^2 = L(e,e)(2L(e,e) - \mathrm{Id}_E),$
\item $P_1(e) = 4L(e,e)(\mathrm{Id}_E - L(e,e)),$ 
\item $P_0(e) = (\mathrm{Id}_E - L(e,e))(\mathrm{Id}_E - 2L(e,e)) = \mathrm{Id}_{E} - 2 L(e,e) + Q(e)^2.$ 
\end{enumerate} Furthermore, the Peirce spaces $E_j(e)$ ($j \in \{0,1,2\}$) satisfy the following multiplication \emph{Peirce rules} under the triple product: $$\{E_i(e), E_j(e), E_k(e)\} \subseteq E_{i-j+k}(e)$$ where $E_{i-j+k}(e) = \{0\}$ if $i-j+k \notin \{0, 1, 2\}$.

Two tripotents $u, e$ in a JB$^*$-triple $E$ are \emph{orthogonal} ($u \perp e$ in short) if $L(u, e) = 0$ (or equivalently, $u \in E_0(e)$). The natural partial order among tripotents in $E$ is defined by:$$u \le e \iff e - u \text{ is a tripotent orthogonal to } u.$$ Equivalently, $u \le e$ holds if, and only if, $u = P_2(u)(e)$ (or $u = \{u, u, e\}$) if, and only if, $u$ is a projection in $E_2(e)$.

A JBW$^*$-triple is a JB$^*$-triple that is a dual Banach space. Such a space has a unique isometric predual, and its triple product is separately weak$^*$-continuous \cite{BartonTimoney}. Moreover, the bidual $E^{**}$ of any JB$^*$-triple $E$ is itself a JBW$^*$-triple, in which $E$ embeds canonically as a JB$^*$-subtriple \cite{dineen86complete}. The \emph{range tripotent}, $\ran(a)$, of an element $a$ in a JBW$^*$-triple $W$ is the
smallest tripotent $e\in W$ such that $a$ is positive in the JBW$^*$-algebra $W_{2} (e)$. 

A closed subspace $I$ of a JB$^*$-triple $E$ is called an \emph{inner ideal} of $E$ if $\{I,E, I\}\subseteq I$. In what follows we gather several structural properties of the inner ideal, $E(a)$, generated by an element $a$ in a JB$^*$-triple $E$ taken from \cite[Proposition 2.1]{BuCHuZa2000MathScand}. The inner ideal $E(a)$ coincides with the norm closure of the image of $Q(a)$, where $Q(a)(x)=\{a,x,a\}$, and satisfies the following properties:
\begin{enumerate}[$(1)$]
	\item The JB$^*$-subtriple $E_a$ generated by $a$ inside $E$, which is isometrically isomorphic to a commutative C$^*$-algebra in which $a$ is a positive generator, is contained in $E(a)$.
	\item If $\ran(a)$ denotes the range tripotent of $a$ in $E^{**}$, the inner ideal $E(a)$ is a JB$^*$-subalgebra of the Peirce-$2$ space $E^{**}_2 (\ran(a))$.
\end{enumerate}

We state next a first metric characterisation of non-zero tripotents in a JB$^*$-triple.

\begin{proposition}\label{prop:range-positive-region} Let $a$ be a norm-one element in a JB$^*$-triple $E$, and let $E(a)$ denote the inner ideal of $E$ generated by $a$. Then the following statements are equivalent: \begin{enumerate}[$(i)$] \item $a$ is a tripotent in $E$,
\item $a$ is the unit element in $E(a)$,
\item $\Sph_{S_{E(a)^+}}
		\bigl(\Sph_{S_{E(a)^+}}(a)\bigr)=\{a\}$.
\end{enumerate}
\end{proposition}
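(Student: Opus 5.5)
The plan is to reduce everything to Theorem~\ref{thm:jb-projections}, applied to the JB$^*$-algebra $\mathfrak{A}:=E(a)$. Here $E(a)$ carries the JB$^*$-algebra structure inherited as a JB$^*$-subalgebra of $E^{**}_2(\ran(a))$, with product $x\circ y=\{x,\ran(a),y\}$ and involution $x^{*}=\{\ran(a),x,\ran(a)\}$. By property (1), $a$ lies in the commutative C$^*$-algebra $E_a$ as a positive generator. So $a\in S_{E(a)^+}$, since $a$ is positive in $E^{**}_2(\ran(a))$ by definition of the range tripotent, and the JB$^*$-subalgebra $E(a)$ inherits positivity. The norm of $E(a)$ is the restriction of the norm of $E$, so the double sphere in (iii) is exactly the double sphere of Theorem~\ref{thm:jb-projections} for $\mathfrak{A}=E(a)$. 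That theorem gives (iii) $\Leftrightarrow$ ``$a$ is a non-zero projection of $E(a)$''. It then remains to show (i) $\Leftrightarrow$ (ii) $\Leftrightarrow$ ``$a$ is a projection in $E(a)$''.

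For ``projection $\Rightarrow$ (i)'', I will use that in the JB$^*$-algebra $E^{**}_2(e)$, $e=\ran(a)$, the Jordan triple product coincides with the triple product of $E^{**}$. If $a=a^*$ and $a\circ a=a$ there, then $\{a,a,a\}=(a\circ a^*)\circ a+(a\circ a^*)\circ a-(a\circ a)\circ a^*=a\circ a=a$. Hence $a$ is a tripotent of $E^{**}$ lying in $E$, and so a tripotent of $E$. For (i) $\Rightarrow$ (ii), suppose $a$ is a tripotent. Then $\ran(a)=a$, because $a$ is the unit of $E^{**}_2(a)$, and is therefore positive there; any smaller tripotent $u\le a$ with $a$ positive in $E^{**}_2(u)$ would force $a\in E^{**}_2(u)$, so $a=P_2(u)a=u$. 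Moreover, $E(a)=\overline{Q(a)(E)}=E_2(a)$, because $Q(a)^2=P_2(a)$ and $Q(a)$ maps $E$ into $E_2(a)$. The unit of $E_2(a)$ for the product $x\circ_a y=\{x,a,y\}$ is $a$, since $\{a,a,x\}=x$ for $x\in E_2(a)$. This proves (ii). For (ii) $\Rightarrow$ projection, note that the unit of a JB$^*$-algebra is a self-adjoint idempotent, and it is non-zero since $\|a\|=1$.

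The step I expect to need the most care is the bookkeeping of identifications. Specifically, the JB$^*$-algebra structure on $E(a)$ used in (ii) and (iii) must be the one from $E^{**}_2(\ran(a))$. In case (i), this structure must agree with the Peirce-2 structure of $E_2(a)$, and this hinges on $\ran(a)=a$. I would verify this minimality claim directly via the Peirce rules, as sketched above. As a fallback, I would work throughout with $E(a)\subseteq E^{**}_2(\ran(a))$ and show that the unit of $E(a)$, when it exists, must equal $\ran(a)$. For that, I would use that $E(a)$ is weak$^*$-dense in $E^{**}_2(\ran(a))$, which follows from property (2) together with the definition of the range tripotent.
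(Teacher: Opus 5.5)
Your proposal is correct and follows essentially the same route as the paper. Like the paper, you apply Theorem~\ref{thm:jb-projections} to $E(a)$ viewed as a JB$^*$-subalgebra of $E^{**}_2(\ran(a))$, turn the tripotent condition into the projection condition through the Jordan expression of the triple product, and use $Q(a)^2=P_2(a)$ to get $E(a)=E_2(a)$ with unit $a$. The only differences are organisational: you close the cycle projection $\Rightarrow$ (i) $\Rightarrow$ (ii) $\Rightarrow$ projection instead of proving (i) $\Leftrightarrow$ projection via $a^3=a$, and you explicitly verify that $\ran(a)=a$ for a tripotent $a$, a point the paper's proof uses tacitly.
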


\begin{proof} Let $r=\ran(a)$ denote the range tripotent of $a$ in $E^{**}$. Having in mind that $E(a)$ is a JB$^*$-subalgebra of $(E_2^{**}(r),\circ_{r},*_{r})$, and $a$ is a positive element in $E(a)$, it follows from Theorem~\ref{thm:jb-projections} that $a$ is a projection in $E(a)$ if, and only if, $\Sph_{S_{E(a)^+}}
	\bigl(\Sph_{S_{E(a)^+}}(a)\bigr)=\{a\}$. However, by the uniqueness of the triple product in any JB$^*$-triple (see  \cite[Proposition 5.5]{Kaup83riemann}), we have $$\{x,y,z\} =  (x\circ_{r} y^{*_{r}})\circ_{r} z + (z\circ_{r} y^{*_{r}})\circ_{r} x - (x\circ_r z)\circ_r y^{*_{r}}$$ for all $x,y,z\in E(a)$. Therefore $a$ is a tripotent in $E$ if, and only if, $a= \{a,a,a\} = (a\circ_r a)\circ_r a,$ that is, $a^3=a$ in $E(a)$, which, for a positive element, is equivalent to saying that $a$ is a projection in $E(a)$. This proves $(i)\Longleftrightarrow(iii)$. Finally, if $a$ is the unit of $E(a)$, then it is a projection there, so $(ii)\Rightarrow(i)$; conversely, if $a$ is a tripotent, then $Q(a)^2=P_2(a)$ shows that $E(a)=E_2(a)$, whose unit is $a$, so $(i)\Rightarrow(ii)$.
\end{proof}

In the next result we obtain a strengthened version of the previous proposition. For a fixed tripotent $e\in E^{**}$, define the \emph{positive region of $S_E$ associated with $e$} as the set 
\begin{equation}\label{eq:positive-region}
P_e:=S_E\cap\bigl(E^{**}_2(e)\bigr)^+.
\end{equation} Note that $P_e$ could be empty. If $P_e\neq \emptyset$, the elements in $P_e$ belong to $S_{E}$, and they are positive in the JB$^*$-algebra $E^{**}_2(e)$. Clearly, the set $P_e$ has diameter at most one, and when $P_e\neq \emptyset$, we can apply the sphere construction \eqref{eq:unit-sphere-around-a-set} with $P=P_e$. We can now prove that the conclusion of Proposition~\ref{prop:range-positive-region} holds when the inner ideal generated by a norm-one element is replaced by a more general structure. 

\begin{samepage}
\begin{theorem}\label{thm:triple-positive-region}
Let $E$ be a JB$^*$-triple, let $e\in E^{**}$ be a tripotent, and
let $a\in P_e$. Then the following statements are equivalent:
\begin{enumerate}[$(i)$]
\item $a$ is a tripotent in $E$;
\item $\Sph_{P_e}\bigl(\Sph_{P_e}(a)\bigr)=\{a\}$.
\end{enumerate}
\end{theorem}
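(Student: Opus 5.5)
Set $W:=E^{**}$ and $\mathfrak{A}:=W_2(e)$, a unital JBW$^*$-algebra with unit $e$, product $\circ_e$ and involution $*_e$. The norm of $\mathfrak{A}$ is the restriction of the norm of $W$, and $E$ sits isometrically in $W$. Hence $P_e=S_E\cap S_{\mathfrak{A}^+}$, and distances in $P_e$ are computed in $\mathfrak{A}$. As in Proposition~\ref{prop:range-positive-region}, uniqueness of the triple product gives, for $x\in\mathfrak{A}$, $\{x,x,x\}=x^{3}$, the Jordan cube in $\mathfrak{A}$. So for $a\in P_e$, $a$ is a tripotent iff $a^3=a$ iff (by positivity and functional calculus) $a$ is a projection of $\mathfrak{A}$. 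The strategy is to rerun the proof of Theorem~\ref{thm:jb-projections} inside $\mathfrak{A}$. The new point is that every test element built there must lie in $E$, not merely in $\mathfrak{A}$.

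$(ii)\Rightarrow(i)$. The candidate test element is $a^2$. I need $a^{2}\in E$: it equals $a^{3}$ computed in the subtriple $E_a\subseteq E$, which is $\{a,a,a\}\in E$. More precisely, $E_a$ is a commutative C$^*$-algebra in which $a$ is a positive generator, and there $a^{3}$ plays the role of $a^2$. So I will use the element $a^{3}=\{a,a,a\}\in E$ rather than the Jordan square $a\circ_e a$, which need not be in $E$. This is the step to be careful with. I will show $a^{3}\in \Sph_{P_e}(\Sph_{P_e}(a))$ by the state argument of Lemma~\ref{lem:square}, run through Shirshov--Cohn as in \eqref{eq lemma square Jordan}. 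Given $x\in P_e$ with $\|x-a\|=1$, embed the JB$^*$-subalgebra of $\mathfrak{A}$ generated by $x,a,e$ in a unital C$^*$-algebra. Pick a state $\varphi$ with $(\varphi(a),\varphi(x))\in\{(1,0),(0,1)\}$. Then $\varphi(a^3)=\varphi(a)$: for $\varphi(a)=0$ use $0\le a^3\le a$, and for $\varphi(a)=1$ use Cauchy--Schwarz twice, or $a^3\ge \dots$ via $\varphi(a)^3\le\varphi(a^3)$ by Jensen on the commutative algebra. Hence $\|a^3-x\|=1$. Also $a^3\in P_e$, since it is positive in $\mathfrak{A}$, has norm one, and lies in $E$. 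Condition $(ii)$ then forces $a^3=a$, so $a$ is a tripotent.

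$(i)\Rightarrow(ii)$. Let $a=p$ be a tripotent in $P_e$, so $p$ is a projection of $\mathfrak{A}$, i.e.\ $p\le e$. Take $b\in\Sph_{P_e}(\Sph_{P_e}(p))$. The test element of Lemma~\ref{lem:jb-detector} is $c=U_{e-p}(b)=b-2p\circ_e b+U_p(b)$, and it must lie in $E$. Peirce calculus relative to $p$ expresses $U_{e-p}(b)$ via $P_0(p)$, $P_1(p)$, $P_2(p)$ applied to $b$. Since $p\in E$, these Peirce projections are polynomials in $L(p,p)$ and $Q(p)$, so they map $E$ into $E$. Since $b\in W_2(e)$ and $p\le e$, one has $U_{e-p}(b)=P_0(p)(b)$, which lies in $E$. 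Then $d=\tfrac12 p+c/\|c\|$ lies in $E\cap S_{\mathfrak{A}^+}=P_e$, and Lemma~\ref{lem:jb-detector} applied in $\mathfrak{A}$ gives $\|d-p\|=1$ and $\|d-b\|<1$, a contradiction. Hence $c=0$, $b=U_p(b)$ and $0\le b\le p$.

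Now $b\in E_2(p)$, a JB$^*$-algebra with unit $p$, and $b$ is positive there. If $b\neq p$, the element $x=(b-mp)/(1-m)$ of Lemma~\ref{lem:corner} lies in $E$ because $b,p\in E$. It is positive in $U_p(\mathfrak{A})\subseteq\mathfrak{A}$ with norm one, so $x\in P_e$, with $\|p-x\|=1$ and $\|b-x\|=m<1$, contradicting the choice of $b$. Thus $b=p$.

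The main obstacle is the $(ii)\Rightarrow(i)$ direction, where the Jordan square of $a$ in $\mathfrak{A}$ may fail to be in $E$. The replacement $a^{3}=\{a,a,a\}$ needs the endpoint estimate $\varphi(a^3)=\varphi(a)$ whenever $\varphi(a)\in\{0,1\}$. For $\varphi(a)=1$ I would argue $1\ge\varphi(a^3)\ge\varphi(a)^3$, using Kadison/Jensen-type convexity in the commutative C$^*$-algebra generated by $a$ and the unit.
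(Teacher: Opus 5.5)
Your proof is correct and follows essentially the paper's route. For $(ii)\Rightarrow(i)$ you use the same test element $\{a,a,a\}$ with the Shirshov--Cohn/state argument, where your Jensen step replaces the paper's bound $e-a^3\le 3(e-a)$. For $(i)\Rightarrow(ii)$ you use the same detector $c=P_0(p)(b)=U_{e-p}(b)\in E$ from Lemma~\ref{lem:jb-detector}, and your explicit corner element from Lemma~\ref{lem:corner} is the step the paper gets by quoting Theorem~\ref{thm:jb-projections} for $E_2(v)$.

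Two harmless slips. First, $\{x,x,x\}=x^3$ holds only for $*_e$-self-adjoint $x$; this is enough, since you apply it only to $a$. Second, $a\circ_e a$ does lie in $E$: it is a norm limit of odd polynomials in $a$, because $t^2=t\sqrt{t^2}$ can be approximated uniformly on $[0,1]$ by $t\,q_n(t^2)$. So switching to the cube was not forced, though it is also the paper's choice.
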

\end{samepage}

\begin{proof} Let $\mathfrak{A}$ denote the unital JB$^*$-algebra $E^{**}_2(e)$.  Take any $b_1,b_2,b_3\in E\cap E^{**}_2(e)$ (a subspace which contains $a$ by hypothesis). It follows from Peirce rules and the fact that $E$ is a subtriple of $E^{**}$ that $\{b_1,b_2,b_3\}\in E\cap E^{**}_2(e)$. Set $a^{[3]} = \{a,a,a\}$. Since $a\in P_e$ and $\|a^{[3]}\| = \|a\|^3 =1$, we deduce that $a^{[3]}\in S_{E}\cap E^{**}_2 (e)$. Furthermore, the triple product on each JB$^*$-triple is unique (cf. \cite[Proposition 5.5]{Kaup83riemann}) and hence on the JB$^*$-algebra $E_2^{**} (e)$ we have $$a^{[3]} = \{a,a,a\} = 2 a \circ_e (a^{*_e} \circ_e a)- a^{*_e} \circ_e (a \circ_e a) = a \circ_e (a \circ_e a) \in \left(E^{**}_2 (e)\right)^+,$$ because $a\in \left(E^{**}_2 (e)\right)^+$. Therefore $a^{[3]}\in P_e$.
	
Now, take $x\in\Sph_{P_e}(a)=\left\{z\in P_e : \|z-a\|=1\right\}$. By the Shirshov--Cohn theorem we can find a unital C$^*$-algebra $A$ containing the JB$^*$-subalgebra of $E^{**}_2 (e)$ generated by $a,x$ and $e$, with the additional property that $e$ is also the unit of $A$. Clearly $a$, $x$ are elements in $S_{A^+}$ with $\|a-x\|=1$. By \eqref{eq:state-endpoints}, there is a state
 $\varphi$ of $A$ satisfying
 \begin{equation}\label{eq:triple-state-dichotomy}
 	(\varphi(a),\varphi(x))=(1,0)\quad\text{or}\quad(\varphi(a),\varphi(x))=(0,1).
 \end{equation}
 Since $t^3 \leq t$ and $1-t^3=(1-t)(1+t+t^2)\leq 3(1-t)$ for all $t \in [0,1]$, by applying the continuous functional calculus in $A$ for $a$, we get
 \[
 0\le a^3\le a,\qquad 0\le e-a^3\le3(e-a).
 \]
 Hence, $\varphi(a^3)=\varphi(a)$ in both cases of \eqref{eq:triple-state-dichotomy}, and a new application of \eqref{eq:state-endpoints} gives
 $\|a^{[3]}-x\|_{E}=\|a^{[3]}-x\|_{E^{**}_2 (e)}=\|a^{[3]}-x\|_{A}=\|a^3-x\|_{A}=1$. Thus
 \[
 a^{[3]} \in \Sph_{P_e}\bigl(\Sph_{P_e}(a)\bigr).
 \]
Consequently, if $(ii)$ holds, we deduce that $\{a,a,a\}=a$, and hence $a$ is a tripotent (and, clearly, $a \leq e$).  

Let us finally prove that $(i)\Rightarrow (ii)$. Suppose $a = v\in P_e$ is a tripotent in $E$, and take $b\in\Sph_{P_e}\bigl(\Sph_{P_e}(v)\bigr)$. It follows from the assumptions that $v$ is a projection in the unital JB$^*$-algebra $E^{**}_2 (e)$, and moreover $E_2 (v)$ is a JB$^*$-subalgebra of $E^{**}_2 (e)$. Set $c=P_0(v)(b)\in E$. By a new application of the uniqueness of the triple product of $E_2^{**} (e)$ (cf. \cite[Proposition 5.5]{Kaup83riemann}), and having in mind that $b$ is positive in $E^{**}_2(e)$, we arrive at  \begin{equation}\label{eq:triple-complementary-compression}\begin{aligned}c&=P_0(v)(b) = b  - 2 L(v,v) (b) + Q(v)^2 (b) = b  - 2 \{v,v,b\} + \{v,\{v,b,v\},v\} \\
&= b - 2 (v\circ_e v^{*_e})\circ_e b -2 (b\circ_e v^{*_e})\circ_e v + 2 (v\circ_e b)\circ_e v^{*_e}+ \{v, U_{v} (b^{*_e}) , v \}\\
&= b - 2 v\circ_e b + U_v^2 (b) = b - 2 v\circ_e b + U_{v\circ_e v} (b) = b - 2 v\circ_e b + U_{v} (b) 
= U_{e-v} (b).
	\end{aligned} 
\end{equation}

If $c\ne 0$, Lemma~\ref{lem:jb-detector}, applied to the elements $v$ and $b$ in the JB$^*$-algebra $E^{**}_2(e)$, implies that  $d=\frac12 v+\frac{c}{\|c\|}\in E$ is positive in $E_2^{**}(e)$, $
\|d\|=\| d - v \|=1,$ and $\|d-b\|<1.$
Therefore, $d\in P_e$ and hence $d\in\Sph_{P_e}(v)$ (since $\| d - v \|=1$). However, the inequality  $\|d-b\|<1$ contradicts the assumption that $b\in\Sph_{P_e}\bigl(\Sph_{P_e}(v)\bigr)$. Therefore, $c=0$, and the
same Lemma~\ref{lem:jb-detector} yields $b=U_v (b)=P_2(v)(b)$ and $0\le b\le v$, which in particular assures that $b\in  S_{\left(E_2(v)\right)^+}$.

Obviously $\Sph_{S_{\left(E_2(v)\right)^+}}(v) = \Sph_{P_e}(v) \cap S_{\left(E_2(v)\right)^+}\subseteq \Sph_{P_e}(v)$, and thus by Theorem~\ref{thm:jb-projections} (or by \cite[Lemma 3.1]{PeraltaSaavedraUnital}) we have $$\Sph_{S_{\left(E_2(v)\right)^+}}\left(\Sph_{P_e}(v) \right) \subseteq  \Sph_{S_{\left(E_2(v)\right)^+}}\big( \Sph_{S_{\left(E_2(v)\right)^+}}(v) \big)=\{v\}.$$ Since $b\in\Sph_{P_e}\bigl(\Sph_{P_e}(v)\bigr) \cap  S_{\left(E_2(v)\right)^+}$, we necessarily conclude that $b = v$. We have thus proved that $\Sph_{P_e}\bigl(\Sph_{P_e}(v)\bigr) =\{v\}$.
\end{proof}

In Theorem~\ref{thm:triple-positive-region} the hypothesis concerning the existence of a tripotent $e\in E^{**}$ such that $a\in P_e$ is quite natural, since we can always consider the range tripotent of $a$ in $E^{**}$.  

Note that if $a$ is a tripotent in a JB$^*$-triple $E$, then its range tripotent in $E^{**}$, $\ran(a),$ coincides with $a$, and $P_{\ran(a)}$ is the
set of positive norm-one elements of the unital JB$^*$-algebra $E_2(a)$, so in
that case Theorem~\ref{thm:triple-positive-region} reduces to the unital JB$^*$-algebra case. Its interest lies in
the converse direction: the region $P_{\ran(a)}$ is determined by $a$ itself,
without assuming in advance that $a$ is a tripotent. 

Specialised to C$^*$-algebras, Theorem~\ref{thm:triple-positive-region} yields the following characterisation of partial isometries, which seems to be new. Here, $P_e=S_A\cap\bigl(A^{**}_2(e)\bigr)^+$, where $A^{**}_2(e)=ee^*A^{**}e^*e$ is equipped with the product $x\circ_e y=\frac12(xe^*y+ye^*x)$ and the involution $x^{*_e}=ex^*e$.

\begin{corollary}\label{cor:partial-isometries}
Let $A$ be a C$^*$-algebra, let $e\in A^{**}$ be a partial isometry, and
let $a\in P_e$. The following statements are equivalent:
\begin{enumerate}[$(i)$]
	\item $a$ is a partial isometry in $A$;
	\item $\Sph_{P_e}\bigl(\Sph_{P_e}(a)\bigr)=\{a\}$.
\end{enumerate}
\end{corollary}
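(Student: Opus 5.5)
This corollary is obtained by specialising Theorem~\ref{thm:triple-positive-region} to $E=A$, regarded as a JB$^*$-triple with the triple product \eqref{eq C*-triple product}. The plan has three steps.

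First, I recall the standard facts needed to put $A$ into that framework. Every C$^*$-algebra is a JB$^*$-triple under $\{a,b,c\}=\frac12(ab^*c+cb^*a)$. Its bidual $A^{**}$ is a von Neumann algebra, and the canonical JBW$^*$-triple structure on $A^{**}$ is given by the same formula with the Arens product. This agrees with the bidual triple structure used in Section~\ref{sec:triples}, by the uniqueness of the triple product \cite[Proposition 5.5]{Kaup83riemann}, or by weak$^*$-density of $A$ together with separate weak$^*$-continuity \cite{BartonTimoney}.

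Second, I identify the Peirce data for a partial isometry $e\in A^{**}$. The tripotents of $A^{**}$ are exactly its partial isometries. For such an $e$ one has $L(e,e)x=\frac12(ee^*x+xe^*e)$, and so
\[
P_2(e)(x)=ee^*xe^*e,\qquad A^{**}_2(e)=ee^*A^{**}e^*e .
\]
The Jordan product is $x\circ_e y=\{x,e,y\}=\frac12(xe^*y+ye^*x)$, and the involution is $x^{*_e}=\{e,x,e\}=ex^*e$. This is precisely the description given just before the statement. Hence the set $P_e$ in the corollary coincides with the positive region \eqref{eq:positive-region} for $E=A$.

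Third, I note that the tripotents of $A$ are its partial isometries: $\{a,a,a\}=aa^*a$, and $aa^*a=a$ characterises partial isometries. Theorem~\ref{thm:triple-positive-region} applied to $E=A$, the tripotent $e$, and $a\in P_e$ then gives $(i)\Leftrightarrow(ii)$ directly. The distances involved are all computed in the norm of $A$, which is the common norm of $E$ and of $E^{**}$, so nothing changes in passing to the C$^*$-setting.

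The only point that needs care is the first step: the JB$^*$-triple structure Theorem~\ref{thm:triple-positive-region} places on $E^{**}$ must be the one coming from the von Neumann algebra $A^{**}$. Otherwise the Peirce-$2$ space, and hence $P_e$, might not be the stated $ee^*A^{**}e^*e$. I would settle this by citing the uniqueness of the triple product, since the Arens-product formula already makes $A^{**}$ a JB$^*$-triple extending that of $A$. Everything else is a direct substitution.
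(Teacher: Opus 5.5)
Your proposal is correct and follows the paper's proof: you specialise Theorem~\ref{thm:triple-positive-region} to $E=A$ with the triple product \eqref{eq C*-triple product}, whose tripotents are exactly the partial isometries. Your additional checks, namely that the bidual triple structure coincides with the von Neumann one by uniqueness of the triple product, and the computation $P_2(e)(x)=ee^*xe^*e$, only make explicit what the paper leaves implicit in its description of $P_e$.
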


\begin{proof}
Apply Theorem~\ref{thm:triple-positive-region} to $E=A$ with the triple product \eqref{eq C*-triple product}, whose tripotents are the partial isometries of $A$.
\end{proof}

\section*{Acknowledgements}

\begingroup
\setlength{\emergencystretch}{3em}
\noindent A.M. Peralta is supported by grant PID2021-122126NB-C31 funded by
MICIU\slash AEI\slash 10.13039\slash 501100011033 (Ministerio de Ciencia,
Innovaci{\'o}n y Universidades y Agencia Estatal de Investigaci{\'o}n) and
ERDF/EU (European Regional Development Fund), Junta de Andaluc\'ia grant
FQM375, and Bureau of Foreign Experts Affairs, MOHRSS, PR China grant S20250924.
\par
\endgroup

\noindent P. Saavedra is supported by a ``Formaci\'{o}n de Profesorado
Universitario'' (FPU) grant from the Spanish Ministry of Science,
Innovation, and Universities (Grant No. FPU23/00747). 

\noindent A.M. Peralta and P. Saavedra are supported by grant PID2025-167660NB-I00 funded by MICIU/AEI/10.13039/501100011033 and ERDF/EU.

\noindent\textbf{Data Availability} Data sharing is not applicable to this
article as no datasets were generated or analysed during the preparation
of the paper.  

\noindent\textbf{Declarations}

\noindent\textbf{Conflict of interest} The authors declare that they have
no conflict of interest.

\raggedbottom


\begin{thebibliography}{99}
\bibitem{AlfsenShultz2003}
E.M. Alfsen, F.W. Shultz,
\emph{Geometry of state spaces of operator algebras},
Mathematics: Theory \& Applications, Birkh\"{a}user, Boston, MA, 2003.

\bibitem{BartonTimoney}
T.J. Barton, R.M. Timoney,
\emph{Weak$^*$-continuity of Jordan triple products and its applications},
Math. Scand. \textbf{59} (1986), 177--191.

\bibitem{BuCHuZa2000MathScand}
L.J. Bunce, Ch.-H. Chu, B. Zalar,
\emph{Structure spaces and decomposition in JB$^*$-triples},
Math. Scand. \textbf{86} (2000), 17--35.

\bibitem{Cabrera-Rodriguez-vol1}
M. Cabrera~Garc\'{\i}a, A. Rodr\'{\i}guez~Palacios,
\emph{Non-associative normed algebras. Vol.~1. The Vidav--Palmer and Gelfand--Naimark theorems},
Encyclopedia of Mathematics and its Applications, vol.~154, Cambridge University Press, Cambridge, 2014.

\bibitem{dineen86complete}
S. Dineen,
\emph{Complete holomorphic vector fields on the second dual of a Banach space},
Math. Scand. \textbf{59} (1986), 131--142.


\bibitem{HancheOlsenStormer}
H. Hanche-Olsen, E. St\o rmer,
\emph{Jordan operator algebras},
Monographs and Studies in Mathematics, vol.~21, Pitman, Boston, MA, 1984.


\bibitem{Kaup83riemann}
W. Kaup,
\emph{A Riemann mapping theorem for bounded symmetric domains in complex Banach spaces},
Math. Z. \textbf{183} (1983), 503--529.

\bibitem{LemmensRoelandsWortel}
B. Lemmens, M. Roelands, M. Wortel,
\emph{Hilbert and Thompson isometries on cones in JB-algebras},
Math. Z. \textbf{292} (2019), 1511--1547.

\bibitem{LeungNgWong2025}
C.-W. Leung, C.K. Ng, N.C. Wong,
\emph{Tingley's problem for positive unit spheres of operator algebras and diametral relations},
Trans. Amer. Math. Soc. \textbf{378} (2025), 2695--2711.

\bibitem{LeungNgWong2026}
C.-W. Leung, C.K. Ng, N.C. Wong,
\emph{Reconstructing the Jordan structure of C$^*$-algebras from the metric on positive unit spheres},
preprint, arXiv:2610.32029 (2026).

\bibitem{MurphyBook}
G.J. Murphy,
\emph{C$^*$-algebras and operator theory},
Academic Press, Boston, MA, 1990.

\bibitem{Nagy2018}
G. Nagy,
\emph{Isometries of spaces of normalized positive operators under the operator norm},
Publ. Math. Debrecen \textbf{92} (2018), 243--254.

\bibitem{PeraltaAOT2018}
A.M. Peralta,
\emph{Characterizing projections among positive operators in the unit sphere},
Adv. Oper. Theory \textbf{3} (2018), 731--744.

\bibitem{PeraltaSaavedraUnital}
A.M. Peralta, P. Saavedra,
\emph{A metric characterization of projections among positive norm-one elements in unital C$^*$-algebras},
Results Math. \textbf{81} (2026), Paper No.~142.

\bibitem{PeSa2025}
A.M. Peralta, P. Saavedra,
\emph{Identifying JBW$^*$-algebras through their spheres of positive elements},
Sci. China Math., to appear, doi:10.1007/s11425-025-2641-3, arXiv:2505.03287.


\end{thebibliography}
\end{document}